\DeclareMathOperator{\Nil}{Nil}
\DeclareMathOperator{\var}{var}
\DeclareMathOperator{\ZR}{ZR}
\newtheorem{theorem}{Theorem}
\newtheorem{proposition}[theorem]{Proposition}
\newtheorem{lemma}[theorem]{Lemma}
\newtheorem{corollary}[theorem]{Corollary}
\renewcommand*\subjclass[2][2000]{\def\@subjclass{#2}\@ifundefined
{subjclassname@#1}{\ClassWarning{\@classname}{Unknown edition (#1) of
Mathematics Subject Classification; using '2000'.}}{\@xp\let\@xp
\subjclassname\csname subjclassname@#1\endcsname}}
\begin{document}

\title[Definability of the variety generated by a commutative monoid]
{Definability of the variety\\
generated by a commutative monoid\\
in the lattice of commutative\\
semigroup varieties}

\author{B. M. Vernikov}

\address{Department of Mathematics and Mechanics, Ural State University,
Lenina 51, 620083 Ekaterinburg, Russia}

\email{bvernikov@gmail.com}

\date{}

\thanks{The work was partially supported by the Russian Foundation for Basic
Research (grants No.\,09-01-12142,\,10-01-000524) and the Federal Education
Agency of the Russian Federation (project No.\,2.1.1/3537).}

\begin{abstract}
Let $M$ be a commutative monoid. We construct a first-order formula that
defines the variety generated by $M$ in the lattice of all commutative
semigroup varieties.
\end{abstract}

\keywords{Semigroup, variety, lattice of varieties, commutative variety,
monoid, first-order formula}

\subjclass{Primary 20M07, secondary 08B15}

\maketitle

A subset $A$ of a lattice $\langle L;\vee,\wedge\rangle$ is called \emph
{definable in} $L$ if there exists a first-order formula $\Phi(x)$ with one
free variable $x$ in the language of lattice operations $\vee$ and $\wedge$
which \emph{defines $A$ in} $L$. This means that, for an element $a\in L$,
the sentence $\Phi(a)$ is true if and only if $a\in A$. If $A$ consists of a
single element, we speak about definability of this element.

We denote the lattice of all commutative semigroup varieties by \textbf{Com}.
A set of commutative semigroup varieties $X$ (or a single commutative
semigroup variety $\mathcal X$) is said to be \emph{definable} if it is
definable in \textbf{Com}. In this situation we will say that the
corresponding first-order formula \emph{defines} the set $X$ or the variety
$\mathcal X$.

Let $M$ be a commutative monoid. In~\cite[Corollary~4.8]{Vernikov-def}, we
provide an explicit first-order formula that defines the variety generated by
$M$ in the lattice of all semigroup varieties. The objective of this note is
to modify the arguments from~\cite{Vernikov-def} in order to present an
explicit formula that defines the variety generated by $M$ in the lattice
\textbf{Com}.

We will denote the conjunction by \& rather than $\wedge$ because the latter
symbol stands for the meet in a lattice. Since the disjunction and the join
in a lattice are denoted usually by the same symbol $\vee$, we use this
symbol for the join and denote the disjunction by \textsc{or}. Evidently, the
relations $\le$, $\ge$, $<$ and $>$ in a lattice $L$ can
be expressed in terms of, say, meet operation $\wedge$ in $L$. So, we will
freely use these four relations in formulas.
Let $\Phi(x)$ be a first-order formula. For the sake of brevity, we put
$$\min\nolimits_x\bigl\{\Phi(x)\bigr\}\;\rightleftharpoons\;\Phi(x)\,\&\,
(\forall y)\,\bigl(y<x\longrightarrow\neg\Phi(y)\bigr)\ldotp$$
Clearly, the formula $\min_x\bigl\{\Phi(x)\bigr\}$ defines the set of all
minimal elements of the set defined by the formula $\Phi(x)$.

Many important sets of semigroup varieties admit a characterization in the
language of atoms of the lattice \textbf{Com}. The set of all atoms of a
lattice $L$ with 0 is defined by the formula
$$\mathtt A(x)\;\rightleftharpoons\;(\exists y)\;\bigl((\forall z)\,(y\le
z)\,\&\,\min\nolimits_x\{x\ne y\}\bigr)\ldotp$$
A description of all atoms of the lattice \textbf{Com} directly follows from
the well-known description of atoms of the lattice of all semigroup varieties
(see \cite{Evans-71,Shevrin-Vernikov-Volkov-09}, for instance). To list
these varieties, we need some notation.

By $\var\Sigma$ we denote the semigroup variety given by the identity system
$\Sigma$. A pair of identities $wx=xw=w$ where the letter $x$ does not occur
in the word $w$ is usually written as the symbolic identity $w=0$\footnote
{This notation is justified because a semigroup with such identities has a
zero element and all values of the word $w$ in this semigroup are equal to
zero.}. Let us fix notation for several semigroup varieties:
\begin{align*}
&\mathcal A_n=\var\,\{x^ny=y,\,xy=yx\}\ \text{--- the variety of Abelian
groups}\\
&\phantom{\mathcal A_n=\var\,\{x^ny=y,\,xy=yx\}\ \text{--- }}\text{whose
exponent divides}\ n,\\
&\mathcal{SL}=\var\,\{x^2=x,\,xy=yx\}\ \text{--- the variety of
semilattices},\\
&\mathcal{ZM}=\var\,\{xy=0\}\ \text{--- the variety of null semigroups}\ldotp
\end{align*}

\begin{lemma}
\label{atoms}
The varieties $\mathcal A_p$ \textup(where $p$ is a prime number\textup),
$\mathcal{SL}$, $\mathcal{ZM}$ and only they are atoms of the lattice
$\mathbf{Com}$.\qed
\end{lemma}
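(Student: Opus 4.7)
The plan is to deduce this from the known classification of atoms of the (larger) lattice of all semigroup varieties, cited as \cite{Evans-71,Shevrin-Vernikov-Volkov-09}. According to this classification, the atoms of the lattice of all semigroup varieties are precisely $\mathcal{A}_p$ for $p$ prime, $\mathcal{SL}$, $\mathcal{ZM}$, and the varieties of left-zero and right-zero semigroups. Of these, exactly the first three are commutative.

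The main task is therefore to transfer this description from the full lattice to $\mathbf{Com}$. I would first observe the following elementary fact: if $\mathcal{V}$ is a commutative semigroup variety, then every semigroup subvariety $\mathcal{W}$ of $\mathcal{V}$ is itself commutative, since $\mathcal{W}$ satisfies every identity of $\mathcal{V}$ and in particular the commutative law $xy=yx$. Consequently, the interval $[\mathcal{T},\mathcal{V}]$ (where $\mathcal{T}$ denotes the trivial variety) computed in $\mathbf{Com}$ coincides with that computed in the lattice of all semigroup varieties. Note also that $\mathcal{T}$ is the least element of both lattices.

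It follows immediately that a variety $\mathcal{V}$ is an atom of $\mathbf{Com}$ if and only if $\mathcal{V}$ is commutative and $\mathcal{V}$ is an atom of the lattice of all semigroup varieties. Combining this equivalence with the cited classification and discarding the two non-commutative atoms, one obtains precisely the list $\mathcal{A}_p$ (for $p$ prime), $\mathcal{SL}$, $\mathcal{ZM}$, proving the lemma.

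No step here is a genuine obstacle; the proof is essentially a bookkeeping argument on top of the cited classification, and the only substantive content is the trivial observation that subvarieties of a commutative variety are commutative, which makes the two lattice-theoretic notions of ``atom'' coincide on commutative varieties.
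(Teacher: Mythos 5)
Your proof is correct and follows the same route the paper intends: Lemma~\ref{atoms} is stated with a reference to the known classification of atoms of the full lattice of semigroup varieties \cite{Evans-71,Shevrin-Vernikov-Volkov-09}, and your transfer argument (subvarieties of a commutative variety are commutative, so the atoms of $\mathbf{Com}$ are exactly the commutative atoms of the full lattice, namely $\mathcal A_p$, $\mathcal{SL}$, $\mathcal{ZM}$) is precisely the ``directly follows'' step the paper leaves to the reader.
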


Put
$$\mathtt{Neut}(x)\;\rightleftharpoons\;(\forall y,z)\;\bigl((x\vee y)\wedge
(y\vee z)\wedge(z\vee x)=(x\wedge y)\vee(y\wedge z)\vee(z\wedge x)\bigr)
\ldotp$$
An element $x$ of a lattice $L$ such that the sentence $\mathtt{Neut}(x)$ is
true is called \emph{neutral}. We denote by $\mathcal T$ the trivial
semigroup variety.

\begin{lemma}[\!\!{\mdseries\cite[Theorem 1.2]{Shaprynskii-dnel}}]
\label{neutral}
A commutative semigroup variety $\mathcal V$ is a neutral element of the
lattice $\mathbf{Com}$ if and only if either $\mathcal{V=COM}$ or $\mathcal
{V=M\vee N}$ where $\mathcal M$ is one of the varieties $\mathcal T$ or
$\mathcal{SL}$, while the variety $\mathcal N$ satisfies the identity $x^2y=
0$.\qed
\end{lemma}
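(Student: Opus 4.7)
The plan is to prove both directions of the characterization separately, leveraging the structural decomposition of commutative semigroup varieties and known properties of the lattice $\mathbf{Com}$.

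For sufficiency, the case $\mathcal V=\mathcal{COM}$ is immediate since the top element of any lattice is neutral. For $\mathcal V=\mathcal M\vee\mathcal N$ with $\mathcal M\in\{\mathcal T,\mathcal{SL}\}$ and $\mathcal N$ satisfying $x^2y=0$, I would verify the neutrality identity directly by exploiting a product-like decomposition: the ``semilattice coordinate'' ($\mathcal T$ or $\mathcal{SL}$) and the ``nil coordinate'' behave essentially independently within the relevant portion of $\mathbf{Com}$. Concretely, for arbitrary commutative varieties $\mathcal Y,\mathcal Z$, one analyses their semilattice and nil components and checks that both sides of $\mathtt{Neut}(x)$ agree componentwise. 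This reduces sufficiency to two simpler claims: $\mathcal{SL}$ itself is neutral (straightforward, since $\mathcal{SL}$ is an atom and its joins and meets with any $\mathcal U$ are very constrained), and each variety satisfying $x^2y=0$ lies in a distributive sublattice on which the neutrality identity holds automatically.

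For necessity, assume $\mathcal V$ is neutral and $\mathcal V\ne\mathcal{COM}$. I would first rule out nontrivial abelian-group content: if $\mathcal A_p\subseteq\mathcal V$ for some prime $p$, then by testing the neutrality equation against $\mathcal Y=\mathcal A_q$ and $\mathcal Z=\mathcal{ZM}$ (or $\mathcal Z=\mathcal{SL}$) for suitable primes $q\ne p$, one derives a contradiction unless $\mathcal V$ absorbs everything, i.e.\ equals $\mathcal{COM}$. This forces the group part of $\mathcal V$ to be trivial, so $\mathcal V=\mathcal M\vee\mathcal N$ for some $\mathcal M\in\{\mathcal T,\mathcal{SL}\}$ and some commutative nil-variety $\mathcal N$. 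The final task is to show $\mathcal N$ satisfies $x^2y=0$; I would argue by contrapositive, using the catalogue of low-degree commutative nil-varieties to produce explicit witnesses $\mathcal Y,\mathcal Z$ violating the neutrality identity whenever $\mathcal N$ contains a variety generated by a word involving a squared letter but not collapsed to zero.

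The main obstacle I anticipate is precisely this last step: showing that any nil-content strictly beyond $x^2y=0$ destroys neutrality. The lattice of commutative nil-varieties is rich and contains antichains, so constructing the witnesses $\mathcal Y,\mathcal Z$ requires careful identity manipulation and a good grasp of joins and meets in that sublattice. The sufficiency half, while conceptually clearer, also requires care to verify that the componentwise decomposition of arbitrary $\mathcal Y,\mathcal Z$ really does make the three-variable distributivity encoded by $\mathtt{Neut}$ hold on the nose.
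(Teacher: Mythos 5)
First, a point of comparison: the paper offers no proof of this lemma at all --- it is imported verbatim from Shaprynski\v{\i} (Theorem 1.2 of the cited paper), so there is no internal argument to measure yours against; what follows judges your sketch on its own terms.

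Your plan has the right general shape (test neutrality against atoms, decompose commutative varieties, produce witnesses), but two steps as written would fail. In the sufficiency half, the claim that a variety satisfying $x^2y=0$ ``lies in a distributive sublattice on which the neutrality identity holds automatically'' proves nothing: $\mathtt{Neut}(x)$ quantifies over \emph{all} $y,z$ in $\mathbf{Com}$, whereas membership in a distributive sublattice only yields the median identity for $y,z$ inside that sublattice (every element of every lattice lies in some distributive sublattice, e.g.\ a chain). Likewise ``$\mathcal{SL}$ is an atom, hence its joins and meets are very constrained'' cannot be the reason it is neutral, since $\mathcal A_p$ is also an atom and is \emph{not} neutral (Corollary \ref{neutral atoms}); what is actually needed is the specific structural fact that $\mathcal X\mapsto(\mathcal X\wedge\mathcal V,\mathcal X\vee\mathcal V)$ is a lattice embedding of $\mathbf{Com}$ for the varieties $\mathcal V$ in question. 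In the necessity half there is a genuine gap: killing the group content only makes $\mathcal V$ combinatorial, and by Corollary \ref{comb and commut} a combinatorial commutative variety has the form $\mathcal C_m\vee\mathcal N$ with $m\ge0$ arbitrary; you must additionally show that a neutral $\mathcal V\ne\mathcal{COM}$ cannot contain $\mathcal C_2$ (i.e.\ that $m\le1$), and your sketch never addresses this --- note that $\mathcal C_2$ is combinatorial but is not of the form $\mathcal M\vee\mathcal N$ with $\mathcal M\in\{\mathcal T,\mathcal{SL}\}$ and $\mathcal N$ nil. Finally, the witnesses $\mathcal Y,\mathcal Z$ needed to show that any nil content beyond $x^2y=0$ destroys neutrality --- which you yourself identify as the main obstacle --- are only promised, not constructed, so the hardest part of the theorem remains unproved.
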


For convenience of references, we formulate the following immediate
consequence of Lemmas~\ref{atoms} and~\ref{neutral}.

\begin{corollary}
\label{neutral atoms}
An atom of the lattice $\mathbf{Com}$ is a neutral element of this lattice if
and only if it coincides with one of the varieties $\mathcal{SL}$ or
$\mathcal{ZM}$.\qed
\end{corollary}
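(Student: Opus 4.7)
The plan is to combine the two preceding lemmas directly: Lemma~\ref{atoms} pins down the three families of atoms of $\mathbf{Com}$, and Lemma~\ref{neutral} gives a structural description of its neutral elements, so the corollary reduces to checking each kind of atom against the neutrality criterion.

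First I would dispense with the easy direction. The variety $\mathcal{SL}$ is the join $\mathcal{SL}\vee\mathcal{T}$, and $\mathcal{T}$ trivially satisfies $x^2y=0$, so Lemma~\ref{neutral} (with $\mathcal M=\mathcal{SL}$ and $\mathcal N=\mathcal T$) shows that $\mathcal{SL}$ is neutral. Similarly, $\mathcal{ZM}$ satisfies $xy=0$ and therefore $x^2y=0$, so writing $\mathcal{ZM}=\mathcal T\vee\mathcal{ZM}$ and applying Lemma~\ref{neutral} with $\mathcal M=\mathcal T$ and $\mathcal N=\mathcal{ZM}$ yields neutrality of $\mathcal{ZM}$.

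For the converse direction I would argue by exclusion, using Lemma~\ref{atoms}: the only remaining atoms are the group varieties $\mathcal A_p$ for $p$ prime, and I must show none of them is neutral. An atom cannot coincide with $\mathcal{COM}$, so if $\mathcal A_p$ were neutral then by Lemma~\ref{neutral} it would equal $\mathcal M\vee\mathcal N$ with $\mathcal M\in\{\mathcal T,\mathcal{SL}\}$ and $\mathcal N\models x^2y=0$. Since $\mathcal A_p$ is an atom, one of the joinands must equal $\mathcal A_p$ itself. The option $\mathcal M=\mathcal A_p$ is ruled out because $\mathcal M\in\{\mathcal T,\mathcal{SL}\}$, and $\mathcal N=\mathcal A_p$ is impossible because a nontrivial group variety cannot satisfy $x^2y=0$ (such an identity forces the existence of a zero, which in a group means triviality). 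This contradiction finishes the proof.

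No step looks genuinely hard; the only thing to watch is the verification that $\mathcal A_p\not\models x^2y=0$, which just uses that an element $x$ of order $p$ in a group of $\mathcal A_p$ gives $x^2y=y\ne0$ for a nontrivial member, so the identity $x^2y=0$ fails. Everything else is direct bookkeeping against the two lemmas.
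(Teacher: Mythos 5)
Your argument is correct and is exactly the intended one: the paper states this corollary as an immediate consequence of Lemma~\ref{atoms} and Lemma~\ref{neutral} (hence the \qed with no written proof), and your case check of the three kinds of atoms against the neutrality criterion, including the observation that an atom equal to $\mathcal M\vee\mathcal N$ must coincide with one of the joinands, is the right way to fill in the details.
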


A semigroup variety $\mathcal V$ is called \emph{chain} if the subvariety
lattice of $\mathcal V$ is a chain. Clearly, each atom of \textbf{Com} is a
chain variety. The set of all chain varieties is definable by the formula
$$\mathtt{Ch}(x)\;\rightleftharpoons\;(\forall y,z)\,(y\le x\,\&\,z\le x
\longrightarrow y\le z\ \text{\textsc{or}}\ z\le y)\ldotp$$

We adopt the usual agreement that an adjective indicating a property shared
by all semigroups of a given variety is applied to the variety itself; the
expressions like ``group variety'', ``periodic variety'', ``nil-variety''
etc.\ are understood in this sense.

Put
\begin{align*}
&\mathcal N_k=\var\,\{x^2=x_1x_2\cdots x_k=0,\,xy=yx\}\ (k\ \text{is a
natural number}),\\
&\mathcal N_\omega=\var\,\{x^2=0,\,xy=yx\},\\
&\mathcal N_3^c=\var\,\{xyz=0,\,xy=yx\}
\end{align*}
(in particular $\mathcal N_1=\mathcal T$ and $\mathcal N_2=\mathcal{ZM}$).
The lattice of all Abelian periodic group varieties is evidently isomorphic
to the lattice of natural numbers ordered by divisibility. This readily
implies that non-trivial chain Abelian group varieties are varieties
$\mathcal A_{p^k}$ with prime $p$ and natural $k$, and only they. Combining
this observation with results of \cite{Sukhanov-82}, we have the following

\begin{lemma}
\label{chain var}
The varieties $\mathcal A_{p^k}$ with prime $p$ and natural $k$, $\mathcal
{SL}$, $\mathcal N_k$, $\mathcal N_\omega$, $\mathcal N_3^c$ and only they
are chain varieties of commutative semigroups.\qed
\end{lemma}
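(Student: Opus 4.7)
\smallskip
\noindent\textbf{Proof proposal.} The plan is to establish the equivalence in two directions. For the ``if'' direction I would go through the list and check that each variety has a linearly ordered subvariety lattice. The case of $\mathcal A_{p^k}$ is immediate from the observation just before the lemma, since the subvarieties of a periodic Abelian group variety correspond under divisibility to divisors of the exponent, and the divisors of $p^k$ form a chain. The case of $\mathcal{SL}$ is trivial because $\mathcal{SL}$ is an atom. For the nil varieties I would verify from the defining identities that the proper subvarieties of $\mathcal N_\omega$ are exactly $\mathcal T=\mathcal N_1,\mathcal N_2,\mathcal N_3,\dots$, and that the proper subvarieties of $\mathcal N_3^c$ are $\mathcal T,\mathcal N_2,\mathcal N_3$; both lists are linearly ordered.

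For the converse, let $\mathcal V$ be any non-trivial chain variety in $\mathbf{Com}$. Since its subvariety lattice is a chain, $\mathcal V$ contains at most one atom of $\mathbf{Com}$; by Lemma \ref{atoms} that atom is either some $\mathcal A_p$, or $\mathcal{SL}$, or $\mathcal{ZM}$. I would split into three cases. If $\mathcal A_p\le\mathcal V$, then $\mathcal V$ contains neither $\mathcal{SL}$ nor $\mathcal{ZM}$, so $\mathcal V$ is a periodic Abelian group variety whose exponent is a power of $p$, and the observation cited in the paragraph preceding the lemma then forces $\mathcal V=\mathcal A_{p^k}$ for some $k$. If $\mathcal{SL}$ is the unique atom below $\mathcal V$, then $\mathcal V$ cannot strictly exceed $\mathcal{SL}$: any commutative variety properly containing $\mathcal{SL}$ would contain either a non-trivial nilpotent element (hence $\mathcal{ZM}$) or a non-trivial group (hence some $\mathcal A_p$), producing a second atom and violating chain-ness. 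If $\mathcal{ZM}$ is the unique atom, then $\mathcal V$ is a commutative nil chain variety, and invoking the classification of such varieties from \cite{Sukhanov-82} yields $\mathcal V\in\{\mathcal N_k,\mathcal N_\omega,\mathcal N_3^c\}$.

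The main obstacle is the last case: classifying all commutative nil chain varieties requires a delicate combinatorial analysis of relatively free commutative semigroups modulo identities of the form $x^n=0$ and $x_1\cdots x_k=0$, and is precisely the non-trivial content imported from \cite{Sukhanov-82}. The other two cases reduce cleanly to Lemma \ref{atoms} together with the stated observation about Abelian group varieties, so the overall proof is an assembly of these three ingredients.
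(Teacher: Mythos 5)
Your proposal is correct and follows essentially the route the paper takes: the paper's entire justification is the observation that the lattice of Abelian periodic group varieties is the divisibility lattice of $\mathbb N$ (forcing the group chain varieties to be the $\mathcal A_{p^k}$) combined with the classification from \cite{Sukhanov-82}, which is exactly where you send the nil case. Your explicit split according to which single atom lies below $\mathcal V$, and the elimination of everything above $\mathcal{SL}$, merely makes precise the assembly the paper leaves implicit, so there is nothing to add.
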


Fig.\ \ref{chain varieties} shows the relative location of chain varieties
in the lattice \textbf{Com}.

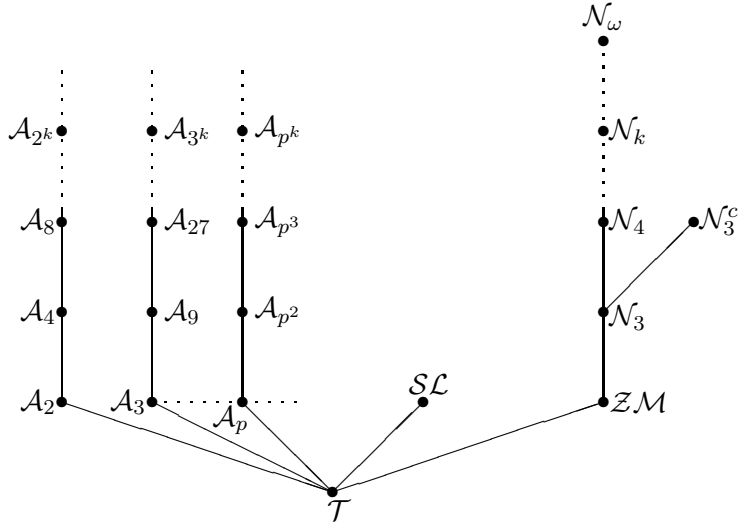
\begin{figure}[tbh]
\begin{center}
\unitlength=.8mm
\linethickness{0.4pt}
\begin{picture}(107,82)
\put(1,18){\line(0,1){32}}
\put(1,18){\line(3,-1){45}}
\put(16,18){\line(0,1){32}}
\put(16,18){\line(2,-1){30}}
\put(31,18){\line(0,1){32}}
\put(31,18){\line(1,-1){15}}
\put(46,3){\line(1,1){15}}
\put(46,3){\line(3,1){45}}
\put(91,18){\line(0,1){32}}
\put(91,33){\line(1,1){15}}
\dashline{.5}(1,50)(1,73)
\dashline{.5}(16,50)(16,73)
\dashline{.5}(18,18)(40,18)
\dashline{.5}(31,50)(31,73)
\dashline{.5}(91,50)(91,76)
\put(1,18){\circle*{1.62}}
\put(1,33){\circle*{1.62}}
\put(1,48){\circle*{1.62}}
\put(1,63){\circle*{1.62}}
\put(16,18){\circle*{1.62}}
\put(16,33){\circle*{1.62}}
\put(16,48){\circle*{1.62}}
\put(16,63){\circle*{1.62}}
\put(31,18){\circle*{1.62}}
\put(31,33){\circle*{1.62}}
\put(31,48){\circle*{1.62}}
\put(31,63){\circle*{1.62}}
\put(46,3){\circle*{1.62}}
\put(61,18){\circle*{1.62}}
\put(91,18){\circle*{1.62}}
\put(91,33){\circle*{1.62}}
\put(91,48){\circle*{1.62}}
\put(91,63){\circle*{1.62}}
\put(91,78){\circle*{1.62}}
\put(106,48){\circle*{1.62}}
\put(0,18){\makebox(0,0)[rc]{$\mathcal A_2$}}
\put(0,33){\makebox(0,0)[rc]{$\mathcal A_4$}}
\put(0,48){\makebox(0,0)[rc]{$\mathcal A_8$}}
\put(0,63){\makebox(0,0)[rc]{$\mathcal A_{2^k}$}}
\put(12,18){\makebox(0,0)[cc]{$\mathcal A_3$}}
\put(18,33){\makebox(0,0)[lc]{$\mathcal A_9$}}
\put(18,48){\makebox(0,0)[lc]{$\mathcal A_{27}$}}
\put(18,63){\makebox(0,0)[lc]{$\mathcal A_{3^k}$}}
\put(29,15){\makebox(0,0)[cc]{$\mathcal A_p$}}
\put(33,33){\makebox(0,0)[lc]{$\mathcal A_{p^2}$}}
\put(33,48){\makebox(0,0)[lc]{$\mathcal A_{p^3}$}}
\put(33,63){\makebox(0,0)[lc]{$\mathcal A_{p^k}$}}
\put(92,32){\makebox(0,0)[lc]{$\mathcal N_3$}}
\put(107,48){\makebox(0,0)[lc]{$\mathcal N_3^c$}}
\put(92,48){\makebox(0,0)[lc]{$\mathcal N_4$}}
\put(92,63){\makebox(0,0)[lc]{$\mathcal N_k$}}
\put(91,82){\makebox(0,0)[cc]{$\mathcal N_\omega$}}
\put(62,21){\makebox(0,0)[cc]{$\mathcal{SL}$}}
\put(47,0){\makebox(0,0)[cc]{$\mathcal T$}}
\put(92,18){\makebox(0,0)[lc]{$\mathcal{ZM}$}}
\end{picture}
\caption{Chain varieties of commutative semigroups}
\label{chain varieties}
\end{center}
\end{figure}

Combining above observations, it is easy to verify the following

\begin{proposition}
\label{definable sets of atoms}
The set of varieties $\{\mathcal A_p\mid p$ is a prime number$\}$ and the
varieties $\mathcal{SL}$ and $\mathcal{ZM}$ are definable.
\end{proposition}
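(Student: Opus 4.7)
The plan is to combine the three formulas already assembled in the excerpt---$\mathtt A(x)$ for atoms, $\mathtt{Neut}(x)$ for neutral elements, and $\mathtt{Ch}(x)$ for chain varieties---to isolate each of the three required objects.

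By Lemma~\ref{atoms}, the atoms of $\mathbf{Com}$ are exactly the varieties $\mathcal A_p$ (prime $p$), $\mathcal{SL}$, and $\mathcal{ZM}$, and Corollary~\ref{neutral atoms} identifies $\mathcal{SL}$ and $\mathcal{ZM}$ as the only neutral ones. Hence the set $\{\mathcal A_p\mid p$ is prime$\}$ is precisely the set of non-neutral atoms, so it is defined by
$$\mathtt A(x)\,\&\,\neg\mathtt{Neut}(x)\ldotp$$

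To separate the remaining two atoms I would use a chain-variety criterion: $\mathcal{ZM}=\mathcal N_2$ lies strictly below the chain variety $\mathcal N_3$, whereas I claim $\mathcal{SL}$ lies strictly below no chain commutative variety at all. Granting the claim, $\mathcal{SL}$ is defined by
$$\mathtt A(x)\,\&\,\mathtt{Neut}(x)\,\&\,\neg(\exists y)\bigl(x<y\,\&\,\mathtt{Ch}(y)\bigr)$$
and $\mathcal{ZM}$ by the same formula with the final negation dropped.

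The only point requiring genuine verification is the claim that no chain variety strictly contains $\mathcal{SL}$, and I expect this to be the sole, but very mild, obstacle. It reduces to a short inspection of the list in Lemma~\ref{chain var}: a non-trivial semilattice satisfies $x^2=x$ and so cannot satisfy any of the group identity $x^ny=y$, the nilpotency identity $x^2=0$, or $xyz=0$; consequently $\mathcal{SL}$ is contained in none of $\mathcal A_{p^k}$, $\mathcal N_k$, $\mathcal N_\omega$, or $\mathcal N_3^c$. With this identity check in hand, the three formulas above furnish the required definitions.
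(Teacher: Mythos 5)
Your proposal is correct and follows essentially the same route as the paper: both distinguish the atoms by neutrality (Corollary~\ref{neutral atoms}) and then separate $\mathcal{SL}$ from $\mathcal{ZM}$ by whether the atom lies strictly below a chain variety, your formulas being logically equivalent to the paper's $\mathtt{SL}(x)$ and $\mathtt{ZM}(x)$. Your formula for $\{\mathcal A_p\}$ merely drops a redundant conjunct that the paper keeps, and your identity check that no chain variety from Lemma~\ref{chain var} properly contains $\mathcal{SL}$ is exactly the verification the paper delegates to Fig.~\ref{chain varieties}.
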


\begin{proof}
By Lemma \ref{atoms}, all varieties mentioned in the proposition are atoms of
\textbf{Com}. By Corollary \ref{neutral atoms}, the varieties $\mathcal{SL}$
and $\mathcal{ZM}$ are neutral elements in \textbf{Com}, while $\mathcal A_p$
is not. Fig.\ \ref{chain varieties} shows that the varieties $\mathcal{ZM}$
and $\mathcal A_p$ are proper subvarieties of some chain varieties, while
$\mathcal{SL}$ is not. Therefore the formulas
\begin{align*}
&\mathtt{SL}(x)\;\rightleftharpoons\;\mathtt A(x)\,\&\,\mathtt{Neut}(x)\,\&\,
(\forall y)\,\bigl(\mathtt{Ch}(y)\,\&\,x\le y\longrightarrow x=y
\bigr),\\
&\mathtt{ZM}(x)\;\rightleftharpoons\;\mathtt A(x)\,\&\,\mathtt{Neut}(x)\,\&\,
(\exists y)\,\bigl(\mathtt{Ch}(y)\,\&\,x<y\bigr)
\end{align*}
define the varieties $\mathcal{SL}$ and $\mathcal{ZM}$ respectively, while
the the formula
$$\mathtt{GrA}(x)\;\rightleftharpoons\;\mathtt A(x)\,\&\,\neg\mathtt{Neut}
(x)\,\&\,(\exists y)\,\bigl(\mathtt{Ch}(y)\,\&\,x<y\bigr)$$
define the set $\{\mathcal A_p\mid p$ is a prime number$\}$.
\end{proof}

Note that in fact each of the group atoms $\mathcal A_p$ is individually
definable (see Proposition \ref{A_n} below). The definability of the
varieties $\mathcal{SL}$ and $\mathcal{ZM}$ is mentioned in \cite[Proposition
3.1]{Kisielewicz-04} without any explicitly written formulas.

Recall that a semigroup variety is called \emph{combinatorial} if all its
groups are trivial.

\begin{proposition}
\label{gr,comb,nil}
The sets of all Abelian periodic group varieties, all combinatorial
commutative varieties and of all commutative nil-varieties of semigroups are
definable.
\end{proposition}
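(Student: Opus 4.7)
The plan is to characterize each of the three sets by an atom-avoidance condition in $\mathbf{Com}$ and then assemble a defining formula out of the predicates $\mathtt{GrA}$, $\mathtt{SL}$, $\mathtt{ZM}$ furnished by Proposition~\ref{definable sets of atoms}. Concretely, I would aim to prove the following equivalences for a commutative variety $\mathcal V$: $\mathcal V$ is combinatorial iff $\mathcal A_p\not\le\mathcal V$ for every prime $p$; $\mathcal V$ is a nil-variety iff it is combinatorial and $\mathcal{SL}\not\le\mathcal V$; and $\mathcal V$ is an Abelian periodic group variety iff $\mathcal{SL}\not\le\mathcal V$ and $\mathcal{ZM}\not\le\mathcal V$.

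The forward implications are essentially tautological: a combinatorial variety contains no non-trivial Abelian group, a nil-variety neither a semilattice nor a non-trivial group, and a group variety neither a semilattice nor a null semigroup. For the converse implications I would lean on two semigroup-theoretic facts. First, a commutative variety that avoids $\mathcal{SL}$ is automatically periodic, because a non-periodic commutative semigroup would contain $(\mathbb N,+)$ as a subsemigroup, and $(\mathbb N,+)$ satisfies only consequences of commutativity and therefore generates all of $\mathcal{COM}$, which already contains $\mathcal{SL}$. Second, every periodic commutative semigroup decomposes as a semilattice of archimedean components, each of which is an ideal extension of an Abelian group by a nil-semigroup. Once $\mathcal{SL}\not\le\mathcal V$ the semilattice quotient is forced to be trivial, and then the absence of $\mathcal{ZM}$ (respectively, of every $\mathcal A_p$) forces the nil-part (respectively, the group-part) of each component to collapse, giving the required structural conclusion.

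With these equivalences in hand, the defining formulas read
\begin{align*}
\mathtt{Comb}(x)&\;\rightleftharpoons\;(\forall y)\bigl(\mathtt{GrA}(y)\longrightarrow\neg(y\le x)\bigr),\\
\mathtt{Nil}(x)&\;\rightleftharpoons\;\mathtt{Comb}(x)\,\&\,(\forall y)\bigl(\mathtt{SL}(y)\longrightarrow\neg(y\le x)\bigr),\\
\mathtt{GrP}(x)&\;\rightleftharpoons\;(\forall y)\bigl(\bigl(\mathtt{SL}(y)\ \text{\textsc{or}}\ \mathtt{ZM}(y)\bigr)\longrightarrow\neg(y\le x)\bigr).
\end{align*}

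The hard part will be the converse directions in the nil and group cases: translating the purely lattice-theoretic condition ``no atom of such-and-such type lies below $\mathcal V$'' into the genuinely structural conclusion that every semigroup in $\mathcal V$ has the required shape. This is where the periodicity argument via $(\mathbb N,+)$ together with the archimedean decomposition of periodic commutative semigroups carries the real weight; once those reductions are in place, the rest is bookkeeping against the atom formulas already supplied by Proposition~\ref{definable sets of atoms}.
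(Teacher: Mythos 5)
Your proposal is correct and follows essentially the same route as the paper: both characterize each of the three sets by which atoms of $\mathbf{Com}$ it avoids and then encode this with the predicates from Proposition~\ref{definable sets of atoms}; your formulas are logically equivalent to the paper's $\mathtt{Gr}$, $\mathtt{Comb}$, $\mathtt{Nil}$ given Lemma~\ref{atoms}. The only difference is that the paper dismisses the semigroup-theoretic equivalences as well known, whereas you sketch their proofs (periodicity via $(\mathbb N,+)$ and the archimedean decomposition), which is a sound but inessential elaboration.
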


\begin{proof}
It is well known that a commutative semigroup variety is an Abelian periodic
group variety [a combinatorial variety, a nil-variety] if and only if it does
not contain the varieties $\mathcal{SL}$ and $\mathcal{ZM}$ [respectively,
the varieties $\mathcal A_p$ for all prime $p$, any atoms except
$\mathcal{ZM}$]. Therefore, the sets of all Abelian periodic group varieties,
all combinatorial commutative varieties and of all commutative nil-varieties
are definable by the formulas
\begin{align*}
&\mathtt{Gr}(x)\;\rightleftharpoons\;(\forall y)\,\bigl(\mathtt A(y)\,\&\,y
\le x\longrightarrow\mathtt{GrA}(y)\bigr);\\
&\mathtt{Comb}(x)\;\rightleftharpoons\;(\forall y)\,\bigl(\mathtt A(y)\,\&\,y
\le x\longrightarrow\neg\mathtt{GrA}(y)\bigr);\\
&\mathtt{Nil}(x)\;\rightleftharpoons\;(\forall y)\,\bigl(\mathtt A(y)\,\&\,y
\le x\longrightarrow\mathtt{ZM}(y)\bigr)
\end{align*}
respectively.
\end{proof}

The claim that the set of all Abelian periodic group varieties is definable
in \textbf{Com} is proved in \cite{Kisielewicz-04} without any explicitly
written formula defining this class.

Identities of the form $w=0$ are called 0-\emph{reduced}. We denote
by $\mathcal{COM}$ the variety of all commutative semigroups. A commutative
semigroup variety is called 0-\emph{reduced in} \textbf{Com} if it is given
within $\mathcal{COM}$ by 0-reduced identities only.

\begin{proposition}
\label{0-red}
The set of all \textup0-reduced in $\mathbf{Com}$ commutative semigroup
varieties is definable.
\end{proposition}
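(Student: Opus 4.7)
The plan is to write $\mathtt{0Red}(x)$ in the same flavor as the formulas in Proposition~\ref{gr,comb,nil}: combine $\mathtt{Nil}(x)$ with an extra lattice-theoretic clause that singles out the 0-reduced varieties among all commutative nil-varieties.

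To justify using $\mathtt{Nil}(x)$ as a factor, I first observe that every variety 0-reduced in $\mathbf{Com}$ is a nil-variety: substituting a single variable $x$ for every variable in any defining identity $w=0$ yields $x^n=0$ for some $n$. Conversely, not every commutative nil-variety is 0-reduced; for example, the variety $\var\{xy=yx,\,x^2y=xy^2,\,x^3=0\}$ is a nil-variety satisfying the non-0-reduced identity $x^2y=xy^2$, whose two sides are distinct nonzero commutative words. So after isolating nil-varieties, real lattice work remains.

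The remaining task is to distinguish the 0-reduced nil-varieties lattice-theoretically. Syntactically, $\mathcal V$ is 0-reduced in $\mathbf{Com}$ iff for every commutative identity $u=v$ holding in $\mathcal V$, either $u$ and $v$ are the same commutative word or $\mathcal V$ satisfies $u=0$. Following the strategy of~\cite{Vernikov-def}, I would look for a characterization of the shape: a commutative nil-variety $\mathcal V$ is 0-reduced iff, for every commutative variety $\mathcal W$, a specific lattice equation involving $\mathcal V$, $\mathcal W$, and the atoms $\mathcal{SL}$, $\mathcal{ZM}$ holds (a modularity/compatibility condition whose failure detects the presence of a non-0-reduced identity). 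Once found, such an equation translates directly into a formula $\Psi(x)$ of shape $(\forall y)\,\Theta(x,y)$ using only $\vee$, $\wedge$, and the already defined predicates $\mathtt{SL}$ and $\mathtt{ZM}$. The final defining formula will then be
$$\mathtt{0Red}(x)\;\rightleftharpoons\;\mathtt{Nil}(x)\,\&\,\Psi(x)\ldotp$$

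The main obstacle is pinpointing the correct $\Psi$; this is the technical core and parallels the argument in~\cite{Vernikov-def} for the full lattice of semigroup varieties. The commutative restriction should simplify matters substantially, because the only source of non-0-reduced identities is now a comparison of genuinely distinct commutative monomials, which carves out a cleaner sublattice structure than in the general semigroup case. Once $\Psi$ is in hand, verifying that $\mathtt{0Red}$ defines precisely the 0-reduced in $\mathbf{Com}$ varieties reduces to checking the two implications against the identity-based definition, using the known classification of subvarieties of commutative nil-varieties.
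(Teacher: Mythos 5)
Your proposal is a plan rather than a proof: the entire mathematical content of the statement is the existence of the extra clause $\Psi(x)$, and you explicitly leave it unspecified (``the main obstacle is pinpointing the correct $\Psi$''). Reducing the problem to ``find a lattice-theoretic condition that separates 0-reduced nil-varieties from the other nil-varieties'' is just a restatement of the proposition, so as written there is a genuine gap at the decisive step. Moreover, the direction you suggest for finding $\Psi$ --- an equation involving the atoms $\mathcal{SL}$ and $\mathcal{ZM}$ and a universally quantified variety $\mathcal W$ --- is not where the known solution lies, and it is not clear it could be pushed through without essentially rediscovering the actual ingredient.

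The paper fills this gap by taking $\Psi(x)$ to be the lower-modularity condition
$$\mathtt{LMod}(x)\;\rightleftharpoons\;(\forall y,z)\,\bigl(x\le y\longrightarrow x\vee(y\wedge z)=y\wedge(x\vee z)\bigr),$$
and invoking Shaprynski\v{\i}'s description of the lower-modular elements of $\mathbf{Com}$ (\cite[Theorem~1.6]{Shaprynskii-mod&lmod}), which implies that a commutative nil-variety is lower-modular in $\mathbf{Com}$ if and only if it is 0-reduced in $\mathbf{Com}$; the defining formula is then $\mathtt{Nil}(x)\,\&\,\mathtt{LMod}(x)$. So your overall shape (``$\mathtt{Nil}(x)$ conjoined with a lattice condition'') matches the paper, but the specific condition and the external classification theorem that makes it work are exactly what your argument lacks; without them (or an independent proof of an equivalent characterization), the proposal does not establish definability. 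As a minor side remark, your claim that every 0-reduced variety is a nil-variety needs the defining set of 0-reduced identities to be nonempty, which is the intended convention here but worth stating.
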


\begin{proof}
Put
$$\mathtt{LMod}(x)\;\rightleftharpoons\;(\forall y,z)\,\bigl(x\le y
\longrightarrow x\vee(y\wedge z)=y\wedge(x\vee z)\bigr)\ldotp$$
An element $x$ of a lattice $L$ such that the sentence $\mathtt{LMod}(x)$ is
true is called \emph{lower-modular}. Lower-modular elements of the lattice
\textbf{Com} are completely determined in \cite[Theorem 1.6]
{Shaprynskii-mod&lmod}. This result immediately implies that a commutative
nil-variety is lower-modular in \textbf{Com} if and only if it is 0-reduced
in \textbf{Com}. Therefore the formula
$$\mathtt{0\text{-}red}(x)\;\rightleftharpoons\;\mathtt{Nil}(x)\,\&\,\mathtt
{LMod}(x)$$
defines the set of all 0-reduced in \textbf{Com} varieties.
\end{proof}

The following general fact will be used in what follows.

\begin{lemma}
\label{chain set}
If a countably infinite subset $S$ of a lattice $L$ is definable in $L$ and
forms a chain isomorphic to the chain of natural numbers under the order
relation in $L$ then every member of this set is definable in $L$.
\end{lemma}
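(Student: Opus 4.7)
The plan is to define the members of $S$ one by one by induction, using the order-isomorphism with $\mathbb{N}$ to pick them out by position.

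Since $S$ is isomorphic to the chain of natural numbers, we may write $S=\{s_0<s_1<s_2<\cdots\}$. Let $\Phi(x)$ be a formula defining $S$ in $L$. I would first observe that $s_0$ is the unique minimal element of $S$, so it is defined by the formula $\Psi_0(x)\;\rightleftharpoons\;\min_x\{\Phi(x)\}$, which is available thanks to the abbreviation introduced earlier in the paper.

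For the inductive step, assuming that $s_{n-1}$ has already been defined by a first-order formula $\Psi_{n-1}(x)$, I would note that $s_n$ is characterized within $S$ as the smallest element strictly exceeding $s_{n-1}$; this uses crucially the hypothesis that $S$ is a chain of order type $\omega$, so the successor of $s_{n-1}$ inside $S$ exists and is unique. Hence $s_n$ is defined by
$$\Psi_n(x)\;\rightleftharpoons\;\min\nolimits_x\bigl\{\Phi(x)\,\&\,(\exists y)\,(\Psi_{n-1}(y)\,\&\,y<x)\bigr\}\ldotp$$
Since $\Phi$ and $\Psi_{n-1}$ are first-order formulas in the language of lattice operations, so is $\Psi_n$, and by construction $\Psi_n$ holds in $L$ exactly at $x=s_n$.

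There is no real obstacle here: the argument is a straightforward induction on $n$, and the only point requiring a moment of care is that the inductive definition yields, for each fixed $n$, an \emph{individual} first-order formula $\Psi_n(x)$ of finite length (depending on $n$); we are not asserting the existence of a single formula uniformly defining the assignment $n\mapsto s_n$, which would be impossible in first-order logic. This observation is enough to conclude that every member of $S$ is definable in $L$.
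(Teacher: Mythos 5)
Your proof is correct and is essentially identical to the paper's own argument: the same induction, the same base case $\min_x\{\Phi(x)\}$, and the same inductive formula $\min_x\bigl\{\Phi(x)\,\&\,(\exists y)\,(\Psi_{n-1}(y)\,\&\,y<x)\bigr\}$. Your closing remark that the formulas $\Psi_n$ are individual (non-uniform in $n$) is a sensible clarification the paper leaves implicit.
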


\begin{proof}
Let $S=\{s_n\mid n\in\mathbb N\}$, $s_1<s_2<\cdots<s_n<\cdots$ and let $\Phi
(x)$ be the formula defining $S$ in $L$. We are going to prove the
definability of the element $s_n$ for each $n$ by induction on $n$. The
induction base is evident because the element $s_1$ is definable by the
formula $\min_x\bigl\{\Phi(x)\bigr\}$. Assume now that $n>1$ and the element
$s_{n-1}$ is definable by some formula $\Psi(x)$. Then the formula
$$\min\nolimits_x\bigl\{\Phi(x)\,\&\,(\exists y)\,\bigl(\Psi(y)\,\&\,y<x\bigr
)\bigr\}$$
defines the element $s_n$.
\end{proof}

The following lemma is a part of the semigroup folklore. It is known at least
since earlier 1980's (see \cite{Korjakov-82}, for instance). In any case, it
immediately follows from Lemma 2 of \cite{Volkov-89} and the proof of
Proposition 1 of the same article.

\begin{lemma}
\label{K+N}
If $\mathcal V$ is a commutative semigroup variety with $\mathcal{V\ne COM}$
then $\mathcal{V=K\vee N}$ where $\mathcal K$ is a variety generated by a
monoid, while $\mathcal N$ is a nil-variety.\qed
\end{lemma}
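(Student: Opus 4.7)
My plan is to take $\mathcal K$ to be the subvariety of $\mathcal V$ generated by all monoids lying in $\mathcal V$ (equivalently, by the relatively free monoid of $\mathcal V$ on countably many generators), and $\mathcal N$ to be the largest nil-subvariety of $\mathcal V$. By construction $\mathcal K$ and $\mathcal N$ are both contained in $\mathcal V$, so the inclusion $\mathcal{K\vee N}\subseteq\mathcal V$ is immediate, and the entire content of the lemma lies in the opposite inclusion.

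For the reverse inclusion, I would pass to the level of identities: it suffices to verify that every commutative semigroup identity holding simultaneously in $\mathcal K$ and in $\mathcal N$ already holds in $\mathcal V$. Working inside the relatively free semigroup $F_\mathcal V$ of $\mathcal V$ on countably many generators, this is equivalent to injectivity of the natural homomorphism $F_\mathcal V\to F_\mathcal K\times F_\mathcal N$, i.e.\ to embedding $F_\mathcal V$ subdirectly into a product of a monoid and a nil-semigroup. The hypothesis $\mathcal V\ne\mathcal{COM}$ enters essentially at this point: it forces $\mathcal V$ to satisfy some non-trivial commutative identity, which in turn produces a consequence of the form $x^n=0$ modulo a suitable ideal of $F_\mathcal V$. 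This is precisely what guarantees that the nil-part $\mathcal N$ is rich enough to capture all the identities that the monoid-part $\mathcal K$ cannot see.

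The main obstacle will be the word-separation behind this injectivity. Given distinct elements $u,v\in F_\mathcal V$, I would split into cases according to whether the failed identity $u=v$ is balanced in the free commutative sense, i.e.\ whether $u$ and $v$ involve exactly the same variables with the same multiplicities. A balanced $u=v$ that fails in $\mathcal V$ should be refutable in a monoid of $\mathcal V$ by an adjoin-identity construction on a suitable free object, while an unbalanced one can be refuted in a nil-semigroup of $\mathcal V$ by a substitution that sends the variables witnessing the imbalance sufficiently deep into the nilpotent part so that one side becomes $0$ and the other does not. Since the lemma belongs to semigroup folklore and the author explicitly attributes it to Lemma 2 and the proof of Proposition 1 of~\cite{Volkov-89}, I would defer the detailed bookkeeping of this case analysis to those sources rather than reproduce it here.
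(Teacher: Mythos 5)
Your choice of the components is fine: taking $\mathcal K$ to be the variety generated by all monoids lying in $\mathcal V$ and $\mathcal N=\Nil(\mathcal V)$ is the maximal admissible choice (note that the existence of $\Nil(\mathcal V)$ already uses the hypothesis $\mathcal{V\ne COM}$, which forces $\mathcal V$ to be periodic), and reducing the lemma to the statement that every identity holding in both $\mathcal K$ and $\mathcal N$ holds in $\mathcal V$ is the right reformulation. The paper itself gives no argument at this point: it records the lemma as folklore and refers to \cite{Korjakov-82} and to Lemma 2 and the proof of Proposition 1 of \cite{Volkov-89}, so deferring details to those sources is in itself legitimate.

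However, the case analysis you propose to defer is not the one carried out in those sources, and as stated it is false. If $u$ and $v$ involve the same letters with the same multiplicities, then $u=v$ holds in every commutative semigroup, so a balanced identity can never fail in $\mathcal{V\subseteq COM}$; your ``balanced'' case is vacuous. Your plan therefore amounts to the claim that every (unbalanced) identity failing in $\mathcal V$ can be refuted in a nil-semigroup of $\mathcal V$, and that is wrong: take $\mathcal{V=SL}$ and the unbalanced identity $xy=x$. It fails in $\mathcal V$, but $\Nil(\mathcal{SL})=\mathcal T$, so no nil member of $\mathcal V$ refutes it --- the refutation must happen in the monoid part (the two-element semilattice). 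The same occurs with $xy=y$ in $\mathcal A_p$. The genuine dichotomy runs the other way and is more delicate: assuming $u=v$ holds in $\Nil(\mathcal V)$ and in all monoids of $\mathcal V$, one compares the occurrence numbers $\ell_x(u)$ and $\ell_x(v)$ modulo the exponent of the group part of $\mathcal V$ and substitutes $1$ for suitable letters (deletion) to reduce to 0-reduced identities --- exactly the kind of bookkeeping appearing in the paper's proof of Lemma \ref{A_n key}, and, structurally, the decomposition $\mathcal{V=A}_n\vee\mathcal C_m\vee\Nil(\mathcal V)$ that underlies \cite{Volkov-89}. As written, your outline does not survive the $\mathcal{SL}$ example, so the key separation step is missing.
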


Let $C_{m,1}$ denote the cyclic monoid $\langle a\mid a^m=a^{m+1}\rangle$ and
let $\mathcal C_m$ be the variety generated by $C_{m,1}$. It is clear that
$$\mathcal C_m=\var\,\{x^m=x^{m+1},\,xy=yx\}\ldotp$$
In particular, $C_{1,1}$ is the 2-element semilattice and $\mathcal C_1=
\mathcal{SL}$. For notation convenience we put also $\mathcal C_0=\mathcal
T$. The following lemma can be easily extracted from the results of \cite
{Head-68}.

\begin{lemma}
\label{comm monoid}
If a periodic semigroup variety $\mathcal V$ is generated by a commutative
monoid then $\mathcal{V=G\vee C}_m$ for some Abelian periodic group variety
$\mathcal G$ and some $m\ge0$.\qed
\end{lemma}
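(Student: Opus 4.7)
The plan is to identify natural candidates for the group variety $\mathcal G$ and the index $m$ from data internal to $\mathcal V$, and then verify $\mathcal V=\mathcal G\vee\mathcal C_m$ by a double inclusion. Since $\mathcal V$ is periodic and commutative, it satisfies some identity $x^{m+k}=x^m$ with $m\ge0$ and $k\ge1$. I would take $m$ to be the least such index (the case $m=0$ handling the situation when $\mathcal V$ is itself a group variety, since $\mathcal C_0=\mathcal T$), and set $\mathcal G=\mathcal V\cap\mathcal{AG}$, where $\mathcal{AG}$ denotes the class of all Abelian groups; then $\mathcal G$ is an Abelian periodic group variety contained in $\mathcal V$.

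The inclusion $\mathcal G\vee\mathcal C_m\subseteq\mathcal V$ is immediate: $\mathcal G\subseteq\mathcal V$ by construction, while the generating monoid $C_{m,1}$ of $\mathcal C_m$ satisfies $x^{m+1}=x^m$ and therefore belongs to $\mathcal V$ by the minimality of $m$.

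The substantive direction is $\mathcal V\subseteq\mathcal G\vee\mathcal C_m$. Since $\mathcal V$ is generated by a commutative monoid $M$, it suffices to show $M\in\mathcal G\vee\mathcal C_m$. I would invoke the structure theory of \cite{Head-68} to represent $M$ as a subdirect product of cyclic monoids $C_{m',k'}$; the minimality of $m$ forces $m'\le m$, and the group part of each such cyclic factor lies in $\mathcal V\cap\mathcal{AG}=\mathcal G$, so $k'$ divides the exponent of $\mathcal G$. Each cyclic constituent then embeds into the direct product $\mathbb Z/k'\times C_{m',1}$ via the map $a^i\longmapsto(i\bmod k',\,a^{\min(i,m')})$, an injection that is readily checked against the defining relation $a^{m'}=a^{m'+k'}$. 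The codomain lies in $\mathcal A_{k'}\vee\mathcal C_{m'}\subseteq\mathcal G\vee\mathcal C_m$, so $M$ does as well.

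The main obstacle is the subdirect decomposition into cyclic monoids with controlled index and period, which is precisely the content extracted from Head's analysis of free commutative periodic monoids. Once that reduction is in place, the explicit embedding into $\mathbb Z/k'\times C_{m',1}$ is a routine verification, and the lemma follows.
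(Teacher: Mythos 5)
Your skeleton (pin down $n$ and $m$ from the identities of $\mathcal V$, then prove two inclusions) is the right one, and your choices of $\mathcal G$ and $m$ are correct, but the pivotal step of the hard direction fails: it is \emph{not} true that a periodic commutative monoid is a subdirect product of cyclic monoids, and this is not what \cite{Head-68} provides. A counterexample is the six-element... rather, five-element monoid $B=\{1,a,b,ab,0\}$ obtained from the free commutative monoid on $a,b$ by sending to $0$ every monomial $a^ib^j$ with $i\ge2$ or $j\ge2$, so that $a^2=b^2=0\ne ab$. Let $\varphi$ be any homomorphism of $B$ onto a cyclic monoid $\langle c\rangle$ with index $s$ and period $p$, and put $\varphi(a)=c^i$, $\varphi(b)=c^j$. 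If $i=0$ then $\varphi(0)=\varphi(a^2)=1$, and $0x=0$ forces $\varphi(x)=\varphi(0)\varphi(x)=\varphi(0)$ for all $x$, so $\varphi$ is constant; likewise for $j=0$. If $i,j\ge1$, then $a^3=0=a^2$ gives $c^{3i}=c^{2i}$, whence $2i\ge s$ and $p\mid i$; the relations $b^2=b^3=0=a^2$ give similarly $c^{2j}=c^{2i}$, $2j\ge s$ and $p\mid j$. Then $i+j\ge s$ and $p\mid(j-i)$, so $c^{i+j}=c^{2i}$, i.e.\ $\varphi(ab)=\varphi(0)$. Thus no cyclic image of $B$ separates $ab$ from $0$, and $B$ is not residually cyclic. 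Your proof therefore breaks down already for $\mathcal V=\var(B)$ (for which the lemma is nevertheless true: $\var(B)=\mathcal C_2$).

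The content actually extractable from \cite{Head-68} is equational rather than structural: the lattice of varieties of commutative \emph{monoids} is completely described there, every proper such variety being defined by commutativity together with a single identity $x^{m+n}=x^m$ and being generated by $\mathbb Z_n\times C_{m,1}$. Consequently a commutative periodic monoid $M$ satisfies exactly the same monoid identities --- hence the same semigroup identities --- as $\mathbb Z_n\times C_{m,1}$ for the $n$ and $m$ you identified, and so generates the same semigroup variety, namely $\mathcal A_n\vee\mathcal C_m$. Your easy inclusion and your embedding of $C_{m',k'}$ into $\mathbb Z/k'\times C_{m',1}$ are fine and are exactly what is needed to see that $\mathbb Z_n\times C_{m,1}$ generates $\mathcal A_n\vee\mathcal C_m$; what must be replaced is the unsupported subdirect decomposition of an arbitrary $M$, which should instead be routed through Head's classification of monoid varieties (or a direct syntactic analysis of the identities of $M$, using substitution of $1$ to reduce any identity of $M$ to its letter-multiplicity data modulo $n$ and the threshold $m$).
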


Lemmas \ref{K+N} and \ref{comm monoid} immediately imply

\begin{corollary}
\label{comb and commut}
If $\mathcal V$ is a commutative combinatorial semigroup variety then
$\mathcal{V=C}_m\vee\mathcal N$ for some $m\ge0$ and some nil-variety
$\mathcal N$.\qed
\end{corollary}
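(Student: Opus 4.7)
The plan is to chain Lemmas \ref{K+N} and \ref{comm monoid} and then collapse the resulting group factor using that $\mathcal V$ is combinatorial. First, combinatoriality forces $\mathcal V\ne\mathcal{COM}$, since $\mathcal{COM}$ contains every atom $\mathcal A_p$ and is therefore not combinatorial. Hence Lemma \ref{K+N} supplies a decomposition $\mathcal{V=K\vee N}$ with $\mathcal N$ a nil-variety and $\mathcal K$ generated by a monoid; because $\mathcal K\subseteq\mathcal V\subseteq\mathcal{COM}$ satisfies $xy=yx$, that monoid is automatically commutative.

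To apply Lemma \ref{comm monoid} to $\mathcal K$ I need $\mathcal K$ to be periodic. For this I would invoke the folklore observation that $\mathcal{COM}$ is the unique non-periodic commutative variety: the free monogenic object in any non-periodic commutative variety is the infinite cyclic semigroup $\mathbb N$, which by itself already generates $\mathcal{COM}$. Hence $\mathcal V\ne\mathcal{COM}$ forces $\mathcal V$, and in particular its subvariety $\mathcal K$, to be periodic. Lemma \ref{comm monoid} then rewrites $\mathcal K$ as $\mathcal{G\vee C}_m$ for some Abelian periodic group variety $\mathcal G$ and some $m\ge0$.

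Finally, $\mathcal G\subseteq\mathcal V$ is a combinatorial group variety, so $\mathcal G=\mathcal T$. Therefore $\mathcal K=\mathcal C_m$ and $\mathcal V=\mathcal K\vee\mathcal N=\mathcal C_m\vee\mathcal N$, as required.

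The only step with any content beyond a direct invocation of the two cited lemmas is the implication ``combinatorial commutative $\Rightarrow$ periodic'', and that is where I would expect the main (small) obstacle to lie; the rest is essentially bookkeeping, which is presumably why the author prefaces the corollary with the words ``immediately imply''.
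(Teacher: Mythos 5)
Your proof is correct and follows exactly the route the paper intends (the corollary is stated without proof as an immediate consequence of Lemmas \ref{K+N} and \ref{comm monoid}): decompose $\mathcal{V=K\vee N}$, write $\mathcal{K=G\vee C}_m$, and kill $\mathcal G$ by combinatoriality. The one detail you flag --- that a commutative variety distinct from $\mathcal{COM}$ is periodic --- is the same folklore fact the paper itself invokes later (in the proof of Proposition \ref{x^m=0,xy=yx}), so nothing is missing.
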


Let now $\mathcal V$ be a commutative semigroup variety with $\mathcal{V\ne
COM}$. Lemmas \ref{K+N} and \ref{comm monoid} imply that $\mathcal{V=G\vee
C}_m\vee\mathcal N$ for some Abelian periodic group variety $\mathcal G$,
some $m\ge0$ and some commutative nil-variety $\mathcal N$. Our aim now is to
provide formulas defining the varieties $\mathcal G$ and $\mathcal C_m$.

It is well known that each periodic semigroup variety $\mathcal X$ contains
its greatest nil-subvariety. We denote this subvariety by $\Nil(\mathcal X)$.
Put
$$\mathcal D_m=\Nil(\mathcal C_m)=\var\,\{x^m=0,\,xy=yx\}$$
for every natural $m$. In particular, $\mathcal D_1=\mathcal T$ and $\mathcal
D_2=\mathcal N_\omega$.

\begin{proposition}
\label{C_m}
For each $m\ge0$, the variety $\mathcal C_m$ is definable.
\end{proposition}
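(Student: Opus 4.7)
The plan is to apply Lemma~\ref{chain set} to the ascending chain $\mathcal C_0\subsetneq\mathcal C_1\subsetneq\mathcal C_2\subsetneq\cdots$. That this is a strict chain in $\mathbf{Com}$ isomorphic to $\mathbb N$ is easy to see: the identity $x^m=x^{m+1}$ entails $x^{m+1}=x^{m+2}$, giving $\mathcal C_m\subseteq\mathcal C_{m+1}$, while the generator $C_{m+1,1}$ of $\mathcal C_{m+1}$ satisfies $x^{m+1}=x^{m+2}$ but not $x^m=x^{m+1}$. It thus suffices to exhibit a first-order formula defining the set $S=\{\mathcal C_m\mid m\ge0\}$ in $\mathbf{Com}$; Lemma~\ref{chain set} then yields individual definability of each $\mathcal C_m$.

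Two boundary cases inside $S$ are immediate: $\mathcal C_0=\mathcal T$ is the zero of $\mathbf{Com}$ (defined by $(\forall y)(x\le y)$), and $\mathcal C_1=\mathcal{SL}$ is defined by the formula $\mathtt{SL}$ from Proposition~\ref{definable sets of atoms}. For the rest of $S$ I plan to exploit Corollary~\ref{comb and commut}: every combinatorial commutative variety decomposes as $\mathcal C_k\vee\mathcal N$ for some $k\ge0$ and some commutative nil-variety $\mathcal N$, and within this decomposition the greatest nil-subvariety satisfies $\Nil(\mathcal C_k\vee\mathcal N)=\mathcal D_k\vee\mathcal N$; in particular $\Nil(\mathcal C_m)=\mathcal D_m$. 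Using this, one verifies that $\mathcal V=\mathcal C_m$ iff $\mathcal V$ is the maximum combinatorial commutative variety with $\mathcal{SL}\le\mathcal V$ and $\Nil(\mathcal V)\le\mathcal D_m$: if $\mathcal V=\mathcal C_k\vee\mathcal N$ is such a variety, then $k\le m$ and $\mathcal N\le\mathcal D_m$, hence $\mathcal V\le\mathcal C_m\vee\mathcal D_m=\mathcal C_m$. Since the operation $\Nil$ is itself lattice-definable (the greatest element below $\mathcal V$ that satisfies $\mathtt{Nil}$ from Proposition~\ref{gr,comb,nil}), this reduces defining $\mathcal C_m$ to defining $\mathcal D_m$.

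The main obstacle is therefore the subsidiary task of defining the chain $\{\mathcal D_m\mid m\ge1\}$ in $\mathbf{Com}$. These are 0-reduced commutative nil-varieties (members of the definable class given by $\mathtt{0\text{-}red}$ from Proposition~\ref{0-red}), they form a chain isomorphic to $\mathbb N$, so another invocation of Lemma~\ref{chain set} should finish the job, provided we can give a first-order description of the set $\{\mathcal D_m\}$ itself. The delicate point is distinguishing these varieties from the other commutative chain nil-varieties $\mathcal N_k$, $\mathcal N_\omega$ and $\mathcal N_3^c$ of Lemma~\ref{chain var}. Those are all chain varieties (detectable via $\mathtt{Ch}$), whereas $\mathcal D_m$ for $m\ge3$ is not a chain variety: its subvariety lattice contains the incomparable pair $\mathcal N_\omega$ and $\mathcal N_3^c$. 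A characterization of the form ``$\mathcal U$ is a maximum 0-reduced commutative nil-variety whose chain sub-varieties are suitably bounded'' is expected to isolate the set $\{\mathcal D_m\}$; once that is accomplished, the characterization of $\mathcal C_m$ from the previous paragraph produces the required formula and two applications of Lemma~\ref{chain set} complete the proof.
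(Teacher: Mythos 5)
Your high-level plan (define the set $\{\mathcal C_m\mid m\ge0\}$ by a formula, then invoke Lemma~\ref{chain set}) matches the paper, and your observation that $\mathcal C_m$ is the largest combinatorial commutative variety whose greatest nil-subvariety lies in $\mathcal D_m$ is correct. But the proof has a genuine gap exactly where all the work lies: you never produce a first-order definition of the set $\{\mathcal D_m\mid m\ge1\}$, you only express the expectation that one exists. The route you sketch cannot work. For $m\ge3$ every commutative chain nil-variety ($\mathcal N_k$, $\mathcal N_\omega$, $\mathcal N_3^c$) is contained in every $\mathcal D_m$, so chain subvarieties do not separate $\mathcal D_3$ from $\mathcal D_4$; worse, they do not separate the $\mathcal D_m$ from other 0-reduced commutative nil-varieties: $\var\{x^2y=0,\ xy=yx\}$ is 0-reduced in $\mathbf{Com}$, lies strictly between $\mathcal D_2$ and $\mathcal D_3$, and has exactly the same chain subvarieties as $\mathcal D_3$. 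Moreover there is no greatest 0-reduced commutative nil-variety, so ``maximum \dots suitably bounded'' would need an $m$-dependent bound, which is the original problem in disguise. Note also that the paper's dependency runs the other way: Proposition~\ref{x^m=0,xy=yx} defines $\mathcal D_m$ as $\Nil(\mathcal C_m)$ \emph{using} the formula for $\mathcal C_m$, so your reduction, though valid, points in the unhelpful direction.

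The idea you are missing is the paper's formula $\mathtt{All\text{-}C_m}(x)$: a combinatorial commutative variety $x$ belongs to $\{\mathcal C_m\mid m\ge0\}$ if and only if every decomposition $x=y\vee z$ with $y$ a nil-variety forces $x=z$. One implication is immediate from Corollary~\ref{comb and commut}; the other requires showing that $\mathcal C_m=\mathcal C_r\vee\mathcal N$ with $\mathcal N$ nil forces $r=m$ (otherwise $\mathcal C_m$ satisfies $x^ry^m=x^{r+1}y^m$, hence, substituting $1$ for $y$ in the generating monoid, $x^r=x^{r+1}$, a contradiction). This characterization avoids $\mathcal D_m$ entirely and is the step your argument lacks. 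A secondary point: your equality $\Nil(\mathcal C_k\vee\mathcal N)=\mathcal D_k\vee\mathcal N$ is asserted without proof and is of the same delicate kind as Lemma~\ref{A_n key}; fortunately your argument only uses the trivial inclusion $\supseteq$.
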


\begin{proof}
First, we are going to verify that the formula
$$\mathtt{All\text{-}C_m}(x)\;\rightleftharpoons\;\mathtt{Comb}(x)\,\&\,
(\forall y,z)\,\bigl(\mathtt{Nil}(y)\,\&\,x=y\vee z\longrightarrow x=z\bigr
)$$
defines the set of varieties $\{\mathcal C_m\mid m\ge0\}$ in \textbf{Com}.
Let $\mathcal V$ be a commutative semigroup variety such that the sentence
$\mathtt{All\text{-}C_m}(\mathcal V)$ is true. Then $\mathcal V$ is
combinatorial. Now Corollary \ref{comb and commut} successfully applies with
the conclusion that $\mathcal{M=C}_m\vee\mathcal N$ for some $m\ge0$ and some
commutative nil-variety $\mathcal N$. The fact that the sentence $\mathtt
{All\text{-}C_m}(\mathcal V)$ is true shows that $\mathcal{M=C}_m$.

Let now $m\ge0$. We aim to verify that the sentence $\mathtt{All\text{-}C_m}
(\mathcal C_m)$ is true. It is evident that the variety $\mathcal C_m$ is
combinatorial. Suppose that $\mathcal C_m=\mathcal{M\vee N}$ where $\mathcal
N$ is a nil-variety. It remains to check that $\mathcal{N\subseteq M}$. We
may assume without any loss that $\mathcal{N=\Nil(C}_m)=\mathcal D_m$. It is
clear that $\mathcal M$ is a commutative and combinatorial variety. Corollary
\ref{comb and commut} implies that $\mathcal{M=C}_r\vee\mathcal N'$ for some
$r\ge0$ and some nil-variety $\mathcal N'$. Then $\mathcal{N'\subseteq\Nil
(C}_m)=\mathcal N$, whence
$$\mathcal C_m=\mathcal{M\vee N=C}_r\vee\mathcal{N'\vee\mathcal N=C}_r\vee
\mathcal N\ldotp$$
It suffices to prove that $\mathcal{N\subseteq C}_r$ because $\mathcal{N
\subseteq C}_r\vee\mathcal{N'=M}$ in this case. The equality $\mathcal C_m=
\mathcal C_r\vee\mathcal N$ implies that $\mathcal C_r\subseteq\mathcal C_m$,
whence $r\le m$. If $r=m$ then $\mathcal{N\subseteq C}_r$, and we are done.
Let now $r<m$. Then the variety $\mathcal C_m=\mathcal C_r\vee\mathcal N$
satisfies the identity $x^ry^m=x^{r+1}y^m$. Recall that the variety $\mathcal
C_m$ is generated by a monoid. Substituting 1 for $y$ in this identity, we
obtain that $\mathcal C_m$ satisfies the identity $x^r=x^{r+1}$. Therefore
$\mathcal C_m\subseteq\mathcal C_r$ contradicting the unequality $r<m$.

Thus we have proved that the set of varieties $\{\mathcal C_m\mid m\ge0\}$ is
definable by the formula $\mathtt{All\text{-}C_m}(x)$. Now Lemma \ref
{chain set} successfully applies with the conclusion that the variety
$\mathcal C_m$ is definable for each $m$.
\end{proof}

\begin{proposition}
\label{x^m=0,xy=yx}
For every natural number $m$, the variety $\mathcal D_m$ is definable.
\end{proposition}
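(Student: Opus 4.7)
The plan is to exhibit $\mathcal D_m$ as the greatest nil-subvariety of $\mathcal C_m$ and then carve it out by a formula that combines the two definability results already in hand. By the very definition of $\mathcal D_m$ we have $\mathcal D_m=\Nil(\mathcal C_m)$, so $\mathcal D_m$ is the unique largest element of the poset of commutative nil-subvarieties of $\mathcal C_m$.

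The two ingredients I need are both already available. By Proposition~\ref{C_m}, for each fixed $m$ there is an explicit formula defining the single variety $\mathcal C_m$; I will call it $\mathtt{C_m}(x)$. By Proposition~\ref{gr,comb,nil}, the class of commutative nil-varieties is defined by the formula $\mathtt{Nil}(x)$. With these in hand I would simply introduce
$$\mathtt{D_m}(x)\;\rightleftharpoons\;\mathtt{Nil}(x)\,\&\,(\exists y)\,\bigl(\mathtt{C_m}(y)\,\&\,x\le y\,\&\,(\forall z)\,(\mathtt{Nil}(z)\,\&\,z\le y\longrightarrow z\le x)\bigr)$$
and claim that it defines $\mathcal D_m$ in $\mathbf{Com}$.

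Verification is immediate in both directions. If a commutative semigroup variety $\mathcal V$ satisfies $\mathtt{D_m}(\mathcal V)$ then the witness $y$ is forced to equal $\mathcal C_m$ (since $\mathtt{C_m}$ picks out a single element of $\mathbf{Com}$), so $\mathcal V$ is a commutative nil-subvariety of $\mathcal C_m$ that dominates every such subvariety; by uniqueness of the greatest nil-subvariety we conclude $\mathcal V=\Nil(\mathcal C_m)=\mathcal D_m$. Conversely, taking $y=\mathcal C_m$ as witness, $\mathcal D_m$ visibly satisfies every conjunct, covering also the boundary case $m=1$ where $\mathcal D_1=\Nil(\mathcal{SL})=\mathcal T$.

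I do not anticipate any genuine obstacle: the whole argument is a one-step reduction of the definability of $\mathcal D_m$ to the definability of $\mathcal C_m$ already established in Proposition~\ref{C_m}, together with the definability of the property of being a nil-variety.
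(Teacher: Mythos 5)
Your proof is correct and is essentially the paper's own argument: the paper likewise defines $\mathcal D_m$ as the greatest nil-subvariety of $\mathcal C_m$, merely packaging the condition into an auxiliary predicate $\mathtt{Nil\text{-}part}(x,y)$ (with a $\mathtt{Per}$ guard that is automatic here since $\mathcal C_m$ is periodic). Your formula is the same one with that predicate unfolded.
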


\begin{proof}
Every commutative semigroup variety either coincides with $\mathcal{COM}$ or
is periodic. Thus the formula
$$\mathtt{Per}(x)\;\rightleftharpoons\;(\exists y)\,(x<y)$$
defines the set of all periodic commutative varieties. In particular, if
$\mathcal X$ is a commutative variety such that the sentence $\mathtt{Per}
(\mathcal X)$ is true then the variety $\Nil(\mathcal X)$ there exists. Put
$$\mathtt{Nil\text{-}part}(x,y)\;\rightleftharpoons\;\mathtt{Per}(x)\,\&\,y
\le x\,\&\,\mathtt{Nil}(y)\,\&\,(\forall z)\,\bigl(z\le x\,\&\,\mathtt{Nil}
(z)\longrightarrow z\le y\bigr)\ldotp$$
Clearly, if $\mathcal X$ and $\mathcal Y$ are commutative semigroup varieties
then the sentence $\mathtt{Nil\text{-}part}(\mathcal{X,Y})$ is true if and
only if $\mathcal X$ is periodic and $\mathcal{Y=\Nil(X)}$. Let $\mathtt
{C_m}$ be the formula defining the variety $\mathcal C_m$. The variety
$\mathcal D_m$ is defined by the formula
$$\mathtt{D_m}(x)\;\rightleftharpoons\;(\exists y)\,\bigl(\mathtt{C_m}(y)\,
\&\,\mathtt{Nil\text{-}part}(y,x)\bigr)$$
because $\mathcal D_m=\Nil(\mathcal C_m)$.
\end{proof}

If $\mathcal X$ is a commutative nil-variety of semigroups then we denote by
$\ZR(\mathcal X)$ the least 0-reduced in \textbf{Com} variety that contains
$\mathcal X$. Clearly, the variety $\ZR(\mathcal X)$ is given within
$\mathcal{COM}$ by all 0-reduced identities that hold in $\mathcal X$. If $u$
is a word and $x$ is a letter then $c(u)$ denotes the set of all letters
occurring in $u$, while $\ell_x(u)$ stands for the number of occurrences of
$x$ in $u$.

\begin{lemma}
\label{A_n key}
Let $m$ and $n$ be natural numbers with $m>2$ and $n>1$. The following are
equivalent:
\begin{itemize}
\item[\textup{(i)}]$\Nil(\mathcal A_n\vee\mathcal{X)=\ZR(X)}$ for any variety
$\mathcal{X\subseteq D}_m$;
\item[\textup{(ii)}]$n\ge m-1$.
\end{itemize}
\end{lemma}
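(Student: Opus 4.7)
The plan is to compare the identities of $\Nil(\mathcal A_n\vee\mathcal X)$ with those of $\ZR(\mathcal X)$. I would first argue that $\Nil(\mathcal A_n\vee\mathcal X)=(\mathcal A_n\vee\mathcal X)\wedge\mathcal D_m$: both $\mathcal A_n$ (via $x^n=1$) and $\mathcal X\subseteq\mathcal D_m$ (via $x^m=0$) satisfy $x^m=x^{m+n}$, and combining this with any nil identity $x^k=0$ forces $k\le m$, so every nil-subvariety of $\mathcal A_n\vee\mathcal X$ lies in $\mathcal D_m$. Next I would verify that $\Nil(\mathcal A_n\vee\mathcal X)$ and $\ZR(\mathcal X)$ automatically satisfy the same 0-reduced identities (namely those of $\mathcal X$): the inclusion $\mathcal X\subseteq\Nil(\mathcal A_n\vee\mathcal X)$ gives one direction, and if $\mathcal X\models w=0$ then substituting $y^n$ for $y$ gives $\mathcal X\models w=wy^n$, an identity also holding in $\mathcal A_n$ (since $y^n=1$); iterating yields $w=wy^{kn}$ in $\Nil(\mathcal A_n\vee\mathcal X)$ for all $k$, and for $kn\ge m$ the identity $x^m=0$ makes $wy^{kn}=0$, so $w=0$ holds in $\Nil(\mathcal A_n\vee\mathcal X)$ as well.

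With the 0-reduced parts taken care of, the equivalence reduces to comparing non-0-reduced identities. A non-trivial non-0-reduced identity of $\Nil(\mathcal A_n\vee\mathcal X)$ has the form $u=v$ where $u\not\equiv v$ are commutative words both non-zero in $\mathcal X$ (so each exponent lies in $[0,m-1]$), $\mathcal X\models u=v$, and $\ell_x(u)\equiv\ell_x(v)\pmod n$ for every letter $x$. To prove $(\text{ii})\Rightarrow(\text{i})$, I would assume $n\ge m-1$: then $|\ell_x(u)-\ell_x(v)|$ is a multiple of $n$ bounded by $m-1$, hence $0$ if $n\ge m$ (forcing $u\equiv v$, a contradiction), and either $0$ or $m-1$ if $n=m-1$. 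In the remaining case, suppose $\ell_x(u)=m-1$ and $\ell_x(v)=0$ for some letter $x$; introducing a fresh variable $y$ and substituting $x\mapsto y^2$ into the identity $u=v$ gives $u|_{x\mapsto y^2}=v$, in which the left-hand side has $\ell_y\ge 2(m-1)\ge m$ and so equals $0$ in $\mathcal D_m\supseteq\mathcal X$, while the right-hand side is unchanged (because $v$ contains no $x$); hence $v=0$ in $\mathcal X$, contradicting non-zeroness. Thus no such $u,v$ exist and $\Nil(\mathcal A_n\vee\mathcal X)=\ZR(\mathcal X)$.

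For the reverse implication I would argue contrapositively. Assuming $n\le m-2$, take $\mathcal X=\var\{xy=yx,\ x^m=0,\ x^{m-1}y=x^{m-1-n}y^{n+1}\}\subseteq\mathcal D_m$ and set $u=x^{m-1}y$, $v=x^{m-1-n}y^{n+1}$. These are distinct commutative words with all exponents in $[1,m-1]$ (using $n+1\le m-1$), and the exponent differences $\ell_x(u)-\ell_x(v)=n$, $\ell_y(u)-\ell_y(v)=-n$ are multiples of $n$, so $\mathcal A_n\vee\mathcal X\models u=v$ and therefore $\Nil(\mathcal A_n\vee\mathcal X)\models u=v$. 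I would then verify that $u$ and $v$ are non-zero in $\mathcal X$ by exhibiting a finite commutative semigroup in $\mathcal X$ in which they evaluate to a common non-zero element, obtained as the free commutative semigroup on $\{x,y\}$ modulo $x^m=0$, $y^m=0$ and the defining relation together with its substitutional consequences; a small explicit example works for $m=4$, $n=2$ and the construction generalises. Since the only non-0-reduced identities of $\ZR(\mathcal X)$ are consequences of commutativity, $\ZR(\mathcal X)\not\models u=v$, so $\Nil(\mathcal A_n\vee\mathcal X)\ne\ZR(\mathcal X)$. The main obstacle is the substitution step for $n=m-1$ in the first implication, which must be arranged to zero out $u$ without affecting $v$, and the hand-verification of non-zeroness in the counterexample, where one must check that no chain of substitutions using $x^m=0$ and the defining relation collapses $u$ or $v$ to the semigroup's zero.
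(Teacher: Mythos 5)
Your argument is correct in substance, and its combinatorial core (exponents congruent modulo $n$, all multiplicities below $m$, hence either syntactic equality or a letter occurring $m-1$ times on one side and not at all on the other, which a substitution then kills) is exactly the paper's; but you organize the direction (ii)$\Rightarrow$(i) differently. The paper first proves the join equality $\mathcal A_n\vee\mathcal X=\mathcal A_n\vee\ZR(\mathcal X)$ by showing every identity of $\mathcal A_n\vee\mathcal X$ holds in $\mathcal A_n\vee\ZR(\mathcal X)$, and then deduces both inclusions, the inclusion $\Nil(\mathcal A_n\vee\mathcal X)\subseteq\ZR(\mathcal X)$ by precisely your $w=wy^{kn}$ trick (in the form $u^{n+1}=u$). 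You instead work inside the equational theory of $\Nil(\mathcal A_n\vee\mathcal X)=(\mathcal A_n\vee\mathcal X)\wedge\mathcal D_m$. That is a legitimate alternative, but it hinges on a step you state without proof: that a non-trivial identity $u=v$ of this meet whose sides are non-zero in $\mathcal X$ must already hold in the whole join $\mathcal A_n\vee\mathcal X$ (this is the only source of your conditions ``$\mathcal X\models u=v$'' and $\ell_x(u)\equiv\ell_x(v)\pmod n$). The claim is true --- the relatively free semigroups of the meet are Rees quotients of those of the join by the ideal generated by $m$-th powers, or, syntactically, a derivation connecting words that are non-zero in the meet can never apply $x^m=0$, since any word to which that identity applies has a letter of multiplicity at least $m$ and is therefore zero --- but it is the linchpin of your reduction and must be argued; the paper's route through the join equality avoids this analysis altogether (and yields that equality as a useful by-product).

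For (i)$\Rightarrow$(ii) you use the same idea as the paper with a different witness: the paper takes $\mathcal X$ given within $\mathcal D_m$ by $x^{n+1}y=xy^{n+1}$, you take $x^{m-1}y=x^{m-1-n}y^{n+1}$; both are identities balanced modulo $n$, so they hold in $\mathcal A_n\vee\mathcal X$ and hence in its nil part, while failing in $\ZR(\mathcal X)$ provided $u,v$ are non-zero in $\mathcal X$. The non-zeroness, which you flag as an obstacle to be settled by a finite example, is left implicit in the paper as well, and for either choice it needs no experimentation: since both sides of the defining identity contain both letters and only letter-for-letter instances of it fit inside the words in question, the congruence class of $u$ consists of two-letter words in which no letter reaches multiplicity $m$, so it never meets the zero ideal determined by $x^m=0$. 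With that verification and the Rees-quotient lemma written out, your proof is complete; the paper's choice of the lower-degree identity $x^{n+1}y=xy^{n+1}$ merely makes this check marginally shorter.
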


\begin{proof}
(i)$\longrightarrow$(ii) Suppose that $n<m-1$. Let $\mathcal X$ be the
subvariety of $\mathcal D_m$ given within $\mathcal D_m$ by the identity
\begin{equation}
\label{A_n key ident}
x^{n+1}y=xy^{n+1}\ldotp
\end{equation}
Since $n+1<m$, the variety $\mathcal X$ is not 0-reduced in \textbf{Com}. Note
that $\mathcal{X\subseteq\Nil(A}_n\vee\mathcal X)$ because $\mathcal X$ is a
nil-variety. The identity \eqref{A_n key ident} holds in the variety
$\mathcal A_n\vee\mathcal X$, and therefore in the variety $\Nil(\mathcal A_n
\vee\mathcal X)$. But the latter variety does not satisfy the identity
$x^{n+1}y=0$ because this identity fails in $\mathcal X$. We see that the
variety $\Nil(\mathcal A_n\vee\mathcal X)$ is not 0-reduced in \textbf{Com}.
Since the variety $\ZR(\mathcal X)$ is 0-reduced in \textbf{Com}, we are
done.

(ii)$\longrightarrow$(i) Let $n\ge m-1$ and $\mathcal{X\subseteq D}_m$. One
can verify that $\mathcal A_n\vee\mathcal{X=A}_n\vee\ZR(\mathcal X)$. Note
that this equality immediately follows from \cite[Lemma 2.5]
{Shaprynskii-dnel} whenever $n\ge m$. We reproduce here the corresponding
arguments for the sake of completeness. It suffices to check that $\mathcal
A_n\vee\mathcal{\ZR(X)\subseteq A}_n\vee\mathcal X$ because the opposite
inclusion is evident. Suppose that the variety $\mathcal A_n\vee\mathcal X$
satisfies an identity $u=v$. We need to prove that this identity holds in
$\mathcal A_n\vee\ZR(\mathcal X)$. Since $u=v$ holds in $\mathcal A_n$, we
have $\ell_x(u)\equiv\ell_x(v)(\text{mod}\,n)$ for any letter $x$. If $\ell_x
(u)=\ell_x(v)$ for all letters $x$ then $u=v$ holds in $\mathcal A_n\vee\ZR
(\mathcal X)$ because this variety is commutative. Therefore we may assume
that $\ell_x(u)\ne\ell_x(v)$ for some letter $x$. Then either $\ell_x(u)\ge
n$ or $\ell_x(v)\ge n$. We may assume without any loss that $\ell_x(u)\ge n$.

Suppose that $n\ge m$. Then the identity $u=0$ holds in the variety $\mathcal
D_m$, whence it holds in $\mathcal X$. This implies that $v=0$ holds in
$\mathcal X$ too. Therefore the variety $\ZR(\mathcal X)$ satisfies the
identities $u=0=v$. Since the identity $u=v$ holds in $\mathcal A_n$, it
holds in $\mathcal A_n\vee\ZR(\mathcal X)$, and we are done.

It remains to consider the case $n=m-1$. Let $x$ be a letter with $x\in c(u)
\cup c(v)$ and $\ell_x(u)\ne\ell_x(v)$. If either $\ell_x(u)\ge m$ or $\ell_x
(v)\ge m$, we go to the situation considered in the previous paragraph. Let
now $\ell_x(u),\ell_x(v)<m$. Since $\ell_x(u)\ge n=m-1$, $\ell_x(u)\equiv
\ell_x(v)(\text{mod}\,n)$ and $\ell_x(u)\ne\ell_x(v)$, we have $\ell_x(u)=n=
m-1$ and $\ell_x(v)=0$. The latter equality means that $x\notin c(v)$.
Substituting 0 for $x$ in $u=v$, we obtain that the variety $\mathcal X$
satisfies the identity $v=0$. We go to the situation considered in the
previous paragraph again.

We have proved that $\mathcal A_n\vee\mathcal{X=A}_n\vee\ZR(\mathcal X)$.
Therefore $\ZR(\mathcal{X)\subseteq\Nil(A}_n\vee\mathcal X)$. If the variety
$\mathcal X$ satisfies an identity $u=0$ then $u^{n+1}=u$ holds in $\mathcal
A_n\vee\mathcal X$. This readily implies that $u=0$ in $\Nil(\mathcal A_n\vee
\mathcal X)$. Hence $\Nil(\mathcal A_n\vee\mathcal{X)\subseteq\ZR(X)}$. Thus
$\Nil(\mathcal A_n\vee\mathcal{X)=\ZR(X)}$.
\end{proof}

Now we are well prepared to prove the following

\begin{proposition}
\label{A_n}
An arbitrary Abelian periodic group variety is definable.
\end{proposition}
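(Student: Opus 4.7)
The plan is to translate Lemma~\ref{A_n key} into a first-order formula, exploiting the sharp numerical criterion it provides. By Lemma~\ref{A_n key}, for any $m>2$ an Abelian periodic group variety $\mathcal A_j$ with $j>1$ satisfies condition~(i) with respect to $\mathcal D_m$ if and only if $j\ge m-1$; taking $m=n+1$ and $m=n+2$ (both exceeding $2$ once $n\ge 2$), the requirement ``(i) holds w.r.t.\ $\mathcal D_{n+1}$ and fails w.r.t.\ $\mathcal D_{n+2}$'' singles out exactly $j=n$. The variety $\mathcal A_1=\mathcal T$, being the least element of $\mathbf{Com}$, will be handled by the separate formula $(\forall y)(x\le y)$.

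To encode the lemma as a lattice formula, I first need to express the operators $\Nil$ and $\ZR$ in the lattice language. The formula $\mathtt{Nil\text{-}part}(x,y)$ meaning ``$y=\Nil(x)$'' is already given in the proof of Proposition~\ref{x^m=0,xy=yx}. Using the formula $\mathtt{0\text{-}red}$ from Proposition~\ref{0-red}, I will introduce analogously
$$\mathtt{ZR\text{-}part}(x,y)\;\rightleftharpoons\;\mathtt{0\text{-}red}(y)\,\&\,x\le y\,\&\,(\forall z)\bigl(\mathtt{0\text{-}red}(z)\,\&\,x\le z\longrightarrow y\le z\bigr),$$
so that $\mathtt{ZR\text{-}part}(\mathcal X,\mathcal Y)$ holds iff $\mathcal Y=\ZR(\mathcal X)$.

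Then, for each $m\ge 3$, using the defining formula $\mathtt{D_m}$ from Proposition~\ref{x^m=0,xy=yx}, I will assemble
$$\mathtt{Cond_m}(x)\;\rightleftharpoons\;(\forall d,y,v,w)\bigl(\mathtt{D_m}(d)\,\&\,y\le d\,\&\,\mathtt{Nil\text{-}part}(x\vee y,v)\,\&\,\mathtt{ZR\text{-}part}(y,w)\longrightarrow v=w\bigr),$$
which faithfully expresses condition~(i) of Lemma~\ref{A_n key} w.r.t.\ $\mathcal D_m$. Combining with the formula $\mathtt{Gr}$ from Proposition~\ref{gr,comb,nil}, the candidate
$$\mathtt{A_n}(x)\;\rightleftharpoons\;\mathtt{Gr}(x)\,\&\,\mathtt{Cond_{n+1}}(x)\,\&\,\neg\mathtt{Cond_{n+2}}(x)$$
should define $\mathcal A_n$ for every $n\ge 2$.

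What remains is to verify uniqueness. By the ``iff'' of Lemma~\ref{A_n key}, no $\mathcal A_j$ with $j\ge 2$ other than $\mathcal A_n$ satisfies the conjunction, and the trivial variety $\mathcal T$ fails $\mathtt{Cond_{n+1}}$ by a direct computation: for $\mathcal V=\mathcal T$ one has $\Nil(\mathcal V\vee\mathcal Y)=\mathcal Y$ for every nil-variety $\mathcal Y$, whereas $\mathcal D_{n+1}$ contains non-$0$-reduced subvarieties whenever $n+1\ge 3$. The main conceptual work has already been done in Lemma~\ref{A_n key}, so no real obstacle is expected; the remaining task is the bookkeeping needed to translate its quantitative content into lattice-theoretic formulas.
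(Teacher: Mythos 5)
Your proposal is correct and follows essentially the same route as the paper: it encodes condition (i) of Lemma~\ref{A_n key} as a lattice formula via the operators $\Nil$ and $\ZR$ (your $\mathtt{ZR\text{-}part}$ is literally the paper's $\mathtt{ZR}$, and your $\mathtt{Cond_{n+1}}(x)\,\&\,\neg\mathtt{Cond_{n+2}}(x)$ is the paper's $\mathtt{A_{\ge n}}(x)\,\&\,\neg\mathtt{A_{\ge n+1}}(x)$), handling $\mathcal T$ separately. Your explicit check that $\mathcal T$ fails $\mathtt{Cond_{n+1}}$ is a small but welcome addition that the paper leaves implicit.
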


\begin{proof}
Abelian periodic group varieties are exhausted by the trivial variety and the
varieties $\mathcal A_n$ with $n>1$. The trivial variety is obviously
definable. For brevity, put
$$\mathtt{ZR}(x,y)\;\rightleftharpoons\;\mathtt{0\text{-}red}(y)\,\&\,x\le
y\,\&\,(\forall z)\,\bigl(\mathtt{0\text{-}red}(z)\,\&\,x\le z\longrightarrow
y\le z\bigr)\ldotp$$
The sentence $\mathtt{ZR}(\mathcal{X,Y})$ is true if and only if $\mathcal{Y=
\ZR(X)}$. Let $m$ be a natural number with $m>2$. In view of Lemma \ref
{A_n key}, the formula
$$\mathtt{A_{\ge m-1}}(x)\;\rightleftharpoons\;\mathtt{Gr}(x)\,\&\,(\forall
y,z,t)\,\bigl(\mathtt{D_m}(y)\,\&\,z\le y\,\&\,\mathtt{Nil\text{-}part}(x\vee
z,t)\longrightarrow\mathtt{ZR}(z,t)\bigr)$$
defines the set of varieties $\{\mathcal A_n\mid n\ge m-1\}$. Therefore the
formula
$$\mathtt{A_n}(x)\;\rightleftharpoons\;\mathtt{A_{\ge n}}(x)\,\&\,\neg\mathtt
{A_{\ge n+1}}(x)$$
defines the variety $\mathcal A_n$.
\end{proof}

It was proved in \cite{Kisielewicz-04} that each Abelian group variety is
definable in the lattice \textbf{Com}. However this paper contain no explicit
first-order formula defining any given Abelian periodic group variety.

Now we are ready to achieve the goal of this note.

\begin{theorem}
\label{comm monoid def}
A semigroup variety generated by a commutative monoid is definable.
\end{theorem}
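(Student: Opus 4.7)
The plan is to assemble the defining formula for $\mathcal V=\var(M)$ from pieces that are already in place. First I would split into cases according to whether $\mathcal V$ is the whole lattice's top element or not. Since every commutative semigroup variety either equals $\mathcal{COM}$ or is periodic (as noted in the proof of Proposition~\ref{x^m=0,xy=yx}), the case $\mathcal V=\mathcal{COM}$ is trivial: $\mathcal{COM}$ is the greatest element of \textbf{Com} and is defined by the formula $(\forall y)(y\le x)$.

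In the remaining case $\mathcal V$ is periodic, so Lemma~\ref{comm monoid} supplies a concrete decomposition $\mathcal V=\mathcal G\vee\mathcal C_m$ where $\mathcal G$ is an Abelian periodic group variety and $m\ge0$, both determined by the given monoid $M$. The summand $\mathcal G$ is either $\mathcal T$ or equals $\mathcal A_n$ for some specific $n\ge2$; in either situation Proposition~\ref{A_n} (together with the explicit definability of $\mathcal T$ noted there) yields a first-order formula $\mathtt{G}(x)$ that defines $\mathcal G$ in \textbf{Com}. In parallel, Proposition~\ref{C_m} furnishes a formula $\mathtt{C_m}(x)$ defining $\mathcal C_m$.

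With these two formulas in hand, $\mathcal V$ itself is defined by
$$\mathtt{V}(x)\;\rightleftharpoons\;(\exists y,z)\bigl(\mathtt{G}(y)\,\&\,\mathtt{C_m}(z)\,\&\,x=y\vee z\bigr),$$
since the conjuncts force $y=\mathcal G$ and $z=\mathcal C_m$, after which $x$ must coincide with the uniquely determined join $\mathcal G\vee\mathcal C_m=\mathcal V$. Note that although the choice of the formulas $\mathtt{G}$ and $\mathtt{C_m}$ depends on $M$, each particular commutative monoid yields its own explicit formula, which is exactly what the theorem claims.

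The main obstacle is therefore not in this final assembly but in the preparatory material: the two genuinely technical steps are Proposition~\ref{C_m}, where the chain $\{\mathcal C_m\}_{m\ge0}$ had to be definably isolated and Lemma~\ref{chain set} invoked to extract individual members, and Proposition~\ref{A_n}, whose proof rests on the fine structural Lemma~\ref{A_n key} comparing $\Nil(\mathcal A_n\vee\mathcal X)$ with $\ZR(\mathcal X)$. Once both are available, the theorem is a one-line existential quantification over the lattice operation $\vee$.
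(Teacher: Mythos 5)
Your proof is correct and follows essentially the same route as the paper: decompose $\mathcal V=\mathcal G\vee\mathcal C_m$ via Lemma~\ref{comm monoid} and combine the formulas supplied by Propositions~\ref{A_n} and~\ref{C_m} through a single existential quantification over the join. The only difference is that you explicitly dispose of the case $\mathcal V=\mathcal{COM}$ before invoking Lemma~\ref{comm monoid} (which formally requires periodicity), a small point of extra care that the paper's own proof passes over in silence.
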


\begin{proof}
Let $\mathcal V$ be a variety generated by some commutative monoid. According
to Lemma \ref{comm monoid}, $\mathcal{V=A}_n\vee\mathcal C_m$ for some $n\ge
1$ and $m\ge0$. It is easy to check that the parameters $n$ and $m$ in this
decomposition are defined uniquely. Therefore the formula
$$(\exists y,z)\,\bigl(\mathtt{A_n}(y)\,\&\,\mathtt{C_m}(z)\,\&\,x=y\vee z
\bigr)$$
defines the variety $\mathcal V$ (we assume here that $\mathtt{A_1}$ is the
evident formula defining the variety $\mathcal A_1=\mathcal T$).
\end{proof}

\medskip

\textbf{Acknowledgement.} The author thanks Dr.\ Olga Sapir for many
stimulating discussions.

\end{document}